\theoremstyle{plain}
\newtheorem{theorem}{Theorem}
\newtheorem{lemma}[theorem]{Lemma}
\newtheorem{prop}[theorem]{Proposition}
\theoremstyle{remark}
\newtheorem*{remark}{Remark}
\newtheorem{conj}[theorem]{Conjecture}
\numberwithin{equation}{section}
\newcommand{\pr}{^\prime}
\newcommand{\ve}{\varepsilon}
\DeclareRobustCommand\widecheck[1]{{\mathpalette\@widecheck{#1}}}
\def\@widecheck#1#2{%
    \setbox\z@\hbox{\m@th$#1#2$}%
    \setbox\tw@\hbox{\m@th$#1%
       \widehat{%
          \vrule\@width\z@\@height\ht\z@
          \vrule\@height\z@\@width\wd\z@}$}%
    \dp\tw@-\ht\z@
    \@tempdima\ht\z@ \advance\@tempdima2\ht\tw@ \divide\@tempdima\thr@@
    \setbox\tw@\hbox{%
       \raise\@tempdima\hbox{\scalebox{1}[-1]{\lower\@tempdima\box
\tw@}}}%
    {\ooalign{\box\tw@ \cr \box\z@}}}
\begin{document}

\author{V\' \i t\v ezslav Kala}
\address{Department of Algebra, Faculty of Mathematics and Physics, Charles University, Sokolov\-sk\' a 83, 18600 Praha~8, Czech Republic}
\email{vita.kala@gmail.com}

\title{Norms of indecomposable integers in real quadratic fields}

\keywords{additively indecomposable integer, real quadratic number field, continued fraction}
\thanks{The author was supported by Charles University Mobility Fund.}

\begin{abstract} We study totally positive, additively indecomposable integers in a real quadratic field $\mathbb Q(\sqrt D)$.
We estimate the size of the norm of an indecomposable integer by expressing it as a power series in $u_i^{-1}$, where $\sqrt D$ has the periodic continued fraction expansion $[u_0, \overline{u_1, u_2, \dots, u_{s-1}, 2u_0}]$. 
This enables us to disprove a conjecture of Jang-Kim \cite{JK} concerning the maximal size of the norm of an indecomposable integer.
\end{abstract}

\subjclass[2010]{11R11, 11A55}

\setcounter{tocdepth}{2}  \maketitle 

\section{Introduction} 

A totally positive integer in a real quadratic field $K=\mathbb Q(\sqrt D)$ is (additively) indecomposable if it can't be expressed as the sum of two totally positive integers.
Indecomposable integers can be explicitly described using the continued fraction  
$\sqrt D=[u_0, \overline{u_1, u_2, \dots, u_{s-1}, 2u_0}]$ and are deeply connected to the structure of the number field $\mathbb Q(\sqrt D)$. 
Recently Blomer and the author \cite{BK}, \cite{Ka} investigated the relation between indecomposable elements and universal quadratic forms over $\mathcal O_K$, 
and showed that if there are $M$ indecomposables (satisfying certain additional properties), then
every universal, totally positive definite, quadratic form over $\mathcal O_K$ has at least $M$ variables.

One of the tools was the easy observation \cite[Lemma 3]{BK} that every totally positive integer, which is not divisible by any rational integer and has sufficiently small norm (at most $\sqrt D$), is indecomposable. This can be viewed as a lower bound on the norm that guarantees indecomposability. 
On the other hand, there are only finitely many indecomposables up to multiplication by units, and so there is a maximum of their norms.

The search for such an upper bound on the norm was started by 
Dress and Scharlau \cite{DS} in 1982, when they proved that every indecomposable integer has norm less than or equal to $D$. Their result was recently improved by Jang and Kim \cite{JK}, who showed that in fact the maximum is at most $\frac D{N}$, 
where $N$ is the minimum of absolute values of negative norms of elements of $\mathcal O_K$. (Both of these results can be improved when $D\equiv 1\pmod 4$ and $\mathbb Z[\sqrt D]\neq \mathcal O_K$ -- however, for simplicity we restrict only to the case $D\equiv 2, 3\pmod 4$ in this paper).

Jang and Kim also proved that the upper bound is optimal in some cases and stated a general conjecture concerning an improvement of the bound, which we repeat as Conjecture \ref{conjecture} below.

In this note, we show that the conjecture doesn't hold. As an illustration we first provide a specific counterexample in Theorem \ref{counterexample}. Then we
give algorithms for computing power series expansions for the norms of negative convergents and indecomposables, compute the first few terms explicitly (Theorem \ref{power series ni} and Proposition \ref{power series beta}), and determine which indecomposable elements have large norms (Proposition \ref{heuristic r}). 
These results have guided us towards finding the example in Theorem \ref{counterexample}, but one can also most likely use them to obtain infinitely many counterexamples, as we indicate at the end of the paper. 
This argument and the proof of Theorem \ref{counterexample} involve a fairly routine verification in Mathematica.

\section*{Acknowledgements}

I wish to thank Valentin Blomer for his very useful comments and suggestions, and to Milan Boh\' a\v cek for his help with the Mathematica program.

\section{Conjecture of Jang-Kim}

Throughout this paper, let $D$ be a squarefree positive integer.
We shall work in the real quadratic field $K=\mathbb Q(\sqrt D)$ and its ring of integers $\mathcal O_K$. For simplicity, we always assume that $D\equiv 2, 3\pmod 4$, so that $\mathcal O_K=\mathbb Z[\sqrt D]$ (although the arguments can be easily modified to cover the case $D\equiv 1\pmod 4$ as well).

For $\alpha=x+y\sqrt D\in \mathbb Q(\sqrt D)$ we denote its conjugate by $\alpha\pr=x-y\sqrt D$ and its norm by $N(\alpha)=\alpha\alpha\pr=x^2-Dy^2$.
We write $\alpha\succ\beta$ to denote $\alpha>\beta$ and $\alpha\pr>\beta\pr$, and say that $\alpha$ is totally positive if $\alpha\succ 0$.
An element $\alpha\in\mathcal O_K$ is (additively) indecomposable if there are no $\beta, \gamma\in\mathcal O_K$ such that $\alpha=\beta+\gamma$ and $\beta, \gamma\succ 0$.

We shall use the following notation and its well-known properties (see eg. \cite{HW}, \cite{Pe} for a reference):
\begin{itemize}
\item $\sqrt D=[u_0, \overline{u_1, u_2, \dots, u_{s-1}, 2u_0}]$ is a periodic continued fraction with $D\equiv 2,3\pmod 4$ a squarefree positive integer
\item $s, u_0, u_1, u_2, \dots, u_{s-1}\in\mathbb N$, $u_{si}=2u_0$, and $u_{si+j}=u_j$ for $i>0$ and $j\geq 0$
\item the sequence $(u_1, u_2, \dots, u_{s-1})$ is symmetric, i.e., $u_{s-i}=u_i$ for $1\leq i\leq s-1$
\item $\frac {p_i}{q_i}:=[u_0, \dots, u_i]$ is the $i$th convergent to $\sqrt D$
\item $p_{i+1}=u_{i+1}p_i+p_{i-1}$ and $q_{i+1}=u_{i+1}q_i+q_{i-1}$ 
(with initial conditions $p_{-1}=1$, $p_0=u_0$, $q_{-1}=0$, $q_0=1$)
\item $p_{i+1}q_i-p_iq_{i+1}=(-1)^i$
\item $\alpha_i:=p_i+q_i\sqrt D\in\mathbb Z[\sqrt D]$ for $i\geq -1$
\item $\alpha_i\succ 0$ $\Leftrightarrow$ $i$ is odd 
\item $N_i:=|N(\alpha_i)|=|p_i^2-Dq_i^2|=(-1)^{i+1}N(\alpha_i)$
\item $N:=\min\left\{|N(\alpha)|, \alpha\in\mathbb Z[\sqrt D]\mathrm{\ such\ that\ }N(\alpha)<0\right\}=\min\left\{|N(\alpha_i)|, i \mathrm{\ even}\right\}$ is the minimum of absolute values of negative norms
\item $T_i:=p_ip_{i-1}-Dq_iq_{i-1}$
\item $\alpha_i\alpha_{i+1}\pr=T_{i+1}+(-1)^i\sqrt D$
\item $c_i:=[u_i, u_{i+1}, u_{i+2}, \dots]$, $c_i=u_i+\frac 1{c_{i+1}}$
\item $\sqrt D=\frac{c_{i+1}p_i+p_{i-1}}{c_{i+1}q_i+q_{i-1}}$
\item $\alpha_{i, r}:=\alpha_i+r\alpha_{i+1}$ is a semiconvergent for $i\geq -1$ and $0\leq r\leq u_{i+2}$
\item $\alpha_{i, 0}=\alpha_i$, $\alpha_{i, u_{i+2}}=\alpha_{i+2}$
\item $\alpha_{i, r}\succ 0$ for $0\leq r\leq u_{i+2}$ $\Leftrightarrow$ $i$ is odd 
\item $M_i:=N(\alpha_{i, \lfloor u_{i+2}/2\rfloor})$
\end{itemize}

It is well-known that all the indecomposable integers are (some of) the semiconvergents, see eg. \cite[\S 16 Nebenn\" aherungsbr\" uche]{Pe}.

\begin{prop}\label{characterize indecomposable}
The indecomposable integers of $\mathbb Z[\sqrt D]$ are exactly the semiconvergents $\alpha_{i, r}$, $\alpha_{i, r}\pr$ for odd $i\geq -1$ and $0\leq r<u_{i+2}$.
\end{prop}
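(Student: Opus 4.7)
The plan is to reformulate indecomposability geometrically. The embedding $\alpha \mapsto (\alpha, \alpha\pr) \in \mathbb R^2$ sends $\mathcal O_K$ to a rank-two lattice $\Lambda$, and the totally positive cone becomes the open first quadrant. A decomposition $\alpha = \beta + \gamma$ with $\beta, \gamma \succ 0$ is equivalent to a lattice point $(\beta, \beta\pr) \in \Lambda$ strictly inside the open rectangle $R_\alpha := (0,\alpha) \times (0,\alpha\pr)$; hence $\alpha \succ 0$ is indecomposable if and only if $R_\alpha$ contains no nonzero lattice point of $\Lambda$.

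For the forward direction, fix odd $i \geq -1$ and $0 \leq r < u_{i+2}$. Total positivity of $\alpha_{i,r}$ is on the bullet list, so it remains to show $R_{\alpha_{i,r}}$ is empty. I would argue from the best-approximation property of convergents: any $\beta = p + q\sqrt D \in \Lambda$ with $|\beta\pr| < |\alpha_i\pr|$ must satisfy $q \geq q_{i+1}$, and the standard sharpening to semiconvergents shows that $0 < \beta\pr < \alpha_{i,r}\pr$ forces $\beta \geq \alpha_{i,r}$. The key computational inputs are the recurrences $p_{j+1} = u_{j+1}p_j + p_{j-1}$, $q_{j+1} = u_{j+1}q_j + q_{j-1}$ and the Wronskian identity $p_{j+1}q_j - p_jq_{j+1} = (-1)^j$. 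The same argument with coordinates swapped handles the conjugate semiconvergents $\alpha_{i,r}\pr$.

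For the converse, let $\beta \in \mathcal O_K$ be totally positive and indecomposable, and assume $\beta > \beta\pr$ (otherwise pass to $\beta\pr$). The semiconvergents $\alpha_{i,r}$ with $i$ odd and $0 \leq r < u_{i+2}$ trace out a ``staircase'' in the $(\alpha, \alpha\pr)$-plane that moves right-and-down, since $\alpha_{i,r+1} - \alpha_{i,r} = \alpha_{i+1}$ with $\alpha_{i+1} > 0$ and $\alpha_{i+1}\pr < 0$; consecutive blocks meet at the principal convergents $\alpha_{i+2} = \alpha_{i,u_{i+2}}$. The empty-rectangle fact just established implies that every lattice point of $\Lambda$ in $\{(\alpha,\alpha\pr) : \alpha > \alpha\pr > 0\}$ lies on or above this staircase; hence $\beta$ is bracketed by a consecutive pair of semiconvergents, and if $\beta$ coincides with neither, the difference $\beta - \alpha_{i,r}$ (for the semiconvergent immediately below-and-left of $\beta$) is totally positive, yielding a proper decomposition that contradicts indecomposability.

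The main obstacle will be the empty-rectangle lemma and the related staircase-covering claim. Both reduce to classical best-approximation properties of continued fractions (compare \cite[\S 16]{Pe}); only minor bookkeeping is required at the transition indices where the staircase turns at a principal convergent $\alpha_{i+2}$, and in handling the base cases such as $\alpha_{-1,0} = 1$.
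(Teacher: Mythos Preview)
The paper does not actually prove this proposition: it states it as well-known and cites \cite[\S 16]{Pe}. Your sketch is a correct outline of how such a proof goes---the geometric reformulation via the Minkowski embedding (indecomposable $\Leftrightarrow$ empty rectangle) is standard, and you correctly identify that both the empty-rectangle claim for semiconvergents and the staircase-covering argument reduce to the classical best-approximation theory of continued fractions, which is precisely the content of the Perron reference the paper invokes. So your approach and the paper's (implicit) approach coincide at the level of sources; you have simply unpacked a bit more of the argument than the paper chose to.

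One minor point of care in your forward direction: the claim ``$|\beta'| < |\alpha_i'|$ forces $q \geq q_{i+1}$'' is the best-approximation theorem of the second kind and needs $q > 0$; the case $q = 0$ (i.e., $\beta \in \mathbb Z$) must be excluded separately by noting that $0 < \alpha_{i,r}' < 1$ for the relevant $(i,r)$, with the base case $\alpha_{-1,0} = 1$ handled on its own as you indicate. This is bookkeeping rather than a gap.
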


We are interested in estimating the maximal norm of an indecomposable integer. First of all, Jang and Kim proved the following interesting result.

\begin{theorem}\cite[Theorem 5]{JK}\label{indecomposable estimate norm}
\nopagebreak

a) Let $i$ be odd. Then $$N(\alpha_{i,r})=\frac{D-(T_{i+1}+rN(\alpha_{i+1}))^2}{|N(\alpha_{i+1})|}.$$

b) If $\alpha\in\mathbb Z[\sqrt D]$ is indecomposable, then $N(\alpha)\leq \frac DN$.
\end{theorem}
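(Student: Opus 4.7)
The plan is to deduce (a) by direct algebraic expansion of $N(\alpha_{i,r})=N(\alpha_i+r\alpha_{i+1})$, using the identity $\alpha_i\alpha_{i+1}\pr = T_{i+1}+(-1)^i\sqrt D$ listed among the bullets, and then to derive (b) as an immediate corollary together with the characterization in Proposition \ref{characterize indecomposable}.

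For (a), I first expand
\[
N(\alpha_{i,r}) = (\alpha_i+r\alpha_{i+1})(\alpha_i\pr+r\alpha_{i+1}\pr) = N(\alpha_i) + r\bigl(\alpha_i\alpha_{i+1}\pr+\alpha_i\pr\alpha_{i+1}\bigr) + r^2 N(\alpha_{i+1}).
\]
The middle cross term is the trace of $\alpha_i\alpha_{i+1}\pr = T_{i+1}+(-1)^i\sqrt D$, so it equals $2T_{i+1}$. For odd $i$, $N(\alpha_i)=N_i$ and $N(\alpha_{i+1})=-N_{i+1}$, so the expression becomes $N_i+2rT_{i+1}-r^2N_{i+1}$. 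To rewrite this in the claimed closed form, I multiply $\alpha_i\alpha_{i+1}\pr$ by its conjugate $\alpha_i\pr\alpha_{i+1}$: the left side equals $N(\alpha_i)N(\alpha_{i+1})=-N_iN_{i+1}$, while the right side is $T_{i+1}^2-D$. Hence $N_i = (D-T_{i+1}^2)/N_{i+1}$, and substituting and completing the square in the numerator gives
\[
N(\alpha_{i,r}) = \frac{D-T_{i+1}^2+2rT_{i+1}N_{i+1}-r^2N_{i+1}^2}{N_{i+1}} = \frac{D-(T_{i+1}-rN_{i+1})^2}{N_{i+1}},
\]
which is exactly $(D-(T_{i+1}+rN(\alpha_{i+1}))^2)/|N(\alpha_{i+1})|$.

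For (b), Proposition \ref{characterize indecomposable} reduces me to bounding $N(\alpha_{i,r})$ for odd $i\geq -1$ and $0\leq r<u_{i+2}$, since $N(\alpha_{i,r}\pr)=N(\alpha_{i,r})$. In the formula from (a) the numerator is at most $D$ because the subtracted quantity is a square, and the denominator $|N(\alpha_{i+1})|$ is the absolute value of a negative norm (as $i+1$ is even), hence $\geq N$ by the definition of $N$. This gives $N(\alpha) \leq D/N$.

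There is essentially no serious obstacle here; both parts are short and mechanical once one invokes the stated identity for $\alpha_i\alpha_{i+1}\pr$. The only point that demands care is the sign bookkeeping between $N(\alpha_i)$ (which alternates with the parity of $i$) and the positive quantity $N_i$, which I would handle uniformly by writing $N(\alpha_i)=(-1)^{i+1}N_i$ and specializing to odd $i$ at the end.
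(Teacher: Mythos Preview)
Your proof is correct and follows essentially the same approach as the paper: both arguments reduce part (a) to the identity $\alpha_i\alpha_{i+1}\pr=T_{i+1}+(-1)^i\sqrt D$ and then read off part (b) from the definition of $N$ together with Proposition~\ref{characterize indecomposable}. The only cosmetic difference is that the paper factors out $\alpha_{i+1}\pr$ and uses multiplicativity of the norm to get the closed form in one stroke, whereas you expand $N(\alpha_{i,r})$ as a quadratic in $r$, derive $N_iN_{i+1}=D-T_{i+1}^2$ from the same identity, and complete the square; both routes are equally short and rigorous.
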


\begin{proof}
For the sake of completeness, we briefly indicate the proof, following \cite{JK}. We have
\begin{displaymath}
\begin{split}
N(\alpha_{i, r})&=N(\alpha_i+r\alpha_{i+1})
=N\left(\frac{1}{\alpha_{i+1}\pr}\cdot\alpha_{i+1}\pr(\alpha_i+r\alpha_{i+1})\right)
=\frac{N(\alpha_{i+1}\pr\alpha_i+rN(\alpha_{i+1}))}{N(\alpha_{i+1})}=\\
&= \frac{N(T_{i+1}+rN(\alpha_{i+1})+(-1)^i\sqrt D)}{N(\alpha_{i+1})}=
\frac{(T_{i+1}+rN(\alpha_{i+1}))^2- D}{N(\alpha_{i+1})}=\\
&=\frac{D-(T_{i+1}+rN(\alpha_{i+1}))^2}{|N(\alpha_{i+1})|}\leq \frac{D}{N},
\end{split}
\end{displaymath}
since $|N(\alpha_{i+1})|\geq N$ by the definition of $-N$ as the maximum of negative norms of elements of $\mathbb Z[\sqrt D]$. Together with Proposition \ref{characterize indecomposable}, this immediately implies b).
\end{proof}

Let $i_0$ be an index such that the minimum $N$ of absolute values of negative norms satisfies $N=N_{i_0+1}(=|N(\alpha_{i_0+1})|)$.
Motivated by the preceding Theorem \ref{indecomposable estimate norm}, Jang and Kim expected that the maximum norm of an indecomposable integer is attained at $\alpha_{i, r}$ for $i=i_0$ and some $r$. This expectation then led them to make the following conjecture.

\begin{conj}\cite[Conjecture 1]{JK}\label{conjecture}
Let $a$ be the smallest nonnegative rational integer such that $N$ divides $D-a^2$. Then $N(\alpha)\leq \frac {D-a^2}{N}$ for all indecomposable $\alpha\in\mathbb Z[\sqrt D]$.
\end{conj}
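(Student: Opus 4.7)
The plan is to combine Proposition \ref{characterize indecomposable} with the formula in Theorem \ref{indecomposable estimate norm}(a): every indecomposable integer has the shape $\alpha_{i,r}$ (or its conjugate) for some odd $i \geq -1$ and $0 \leq r < u_{i+2}$, with norm
\[
N(\alpha_{i,r}) = \frac{D - (T_{i+1} + rN(\alpha_{i+1}))^2}{|N(\alpha_{i+1})|}.
\]
The goal is to locate the maximum of this quantity as $(i,r)$ varies and match it to $(D-a^2)/N$.

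I would first focus on the distinguished index $i = i_0$. There $|N(\alpha_{i_0+1})| = N$, and since $N(\alpha_{i_0}) \in \mathbb Z$ is exactly the value at $r = 0$, the congruence $T_{i_0+1}^2 \equiv D \pmod{N}$ is forced. Thus $T_{i_0+1}$ is a square root of $D$ modulo $N$, and as $r$ varies, $T_{i_0+1} + rN$ sweeps out the entire residue class of $T_{i_0+1}$; the norm $N(\alpha_{i_0,r})$ is maximized precisely when $|T_{i_0+1} + rN|$ is minimized. If this minimum equals the $a$ in the conjecture \emph{and} no larger norm appears at any other $i$, the conjecture follows.

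Concretely, the two claims I would try to prove are: \emph{(A)} at $i=i_0$ the minimum of $|T_{i_0+1} + rN|$ over $0 \leq r < u_{i_0+2}$ equals $a$; and \emph{(B)} for every other odd $i$, the ratio $(D - (T_{i+1}+rN(\alpha_{i+1}))^2)/|N(\alpha_{i+1})|$ is at most $(D - a^2)/N$, so that the larger denominator $|N(\alpha_{i+1})| > N$ more than compensates for any decrease in the numerator. Claim (A) would naturally reduce to showing $T_{i_0+1} \equiv \pm a \pmod{N}$---i.e.\ that among the square roots of $D$ modulo $N$, the one realized by the continued fraction at the distinguished index is the smallest nonnegative root---together with checking that the window $0 \leq r < u_{i_0+2}$ is wide enough to reach the centered representative. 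Claim (B) would amount to showing that no competing index produces a square root $b'$ of $D$ modulo $|N(\alpha_{i+1})|$ so small that $(D - b'^2)/|N(\alpha_{i+1})|$ overtakes $(D-a^2)/N$.

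The main obstacle, I expect, is Claim (A): nothing among the continued-fraction identities listed in the preliminaries singles out $\pm a$ as \emph{the} reduction of $T_{i_0+1}$ when $D$ has several square roots mod $N$, so in general $T_{i_0+1}$ may reduce to a strictly larger root. Ironically, this would force the actual maximum at $i_0$ strictly below $(D-a^2)/N$, so the conjecture can fail only through Claim (B)---some $i \neq i_0$ whose numerator is large enough to beat the denominator advantage enjoyed by $i_0$. I would therefore expect the delicate point to be the comparison across indices rather than the analysis at $i_0$ alone, and the power-series expansions announced in the introduction to be the tool that pinpoints exactly where this competition violates the conjectured inequality.
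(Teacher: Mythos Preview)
Your proposal is not a proof but an analysis of what a proof would require---and that is appropriate, because the statement is a \emph{conjecture} that the paper \emph{disproves}. There is no proof in the paper to compare against; Theorem~\ref{counterexample} exhibits a specific $D$ for which the conjectured bound fails. Any argument purporting to establish the conjecture must therefore contain a gap.

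Your decomposition into Claims~(A) and~(B) is exactly right, and so is your diagnosis of where the failure lies. In the paper's counterexample, Claim~(A) actually \emph{holds}: the distinguished index is $i_0 = 1$, and the maximum norm there is $M_1 = (D-a^2)/N$ on the nose (item~(5) of Theorem~\ref{counterexample}). The conjecture fails precisely through Claim~(B): the index $j = 7$ gives $M_7 > M_1$, producing an indecomposable with norm strictly exceeding $(D-a^2)/N$.

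The mechanism the paper uses to construct this $D$ is what you anticipate in your final paragraph. The power-series expansions of Theorem~\ref{power series ni} and Proposition~\ref{power series beta} express $N_{i+1}$ and $M_i$ in the $u_j^{-1}$; one then engineers the partial quotients so that the low-order terms of $N_{i_0+1}$ and $N_{j+1}$ agree (forcing $u_{i_0+2} = u_{j+2}$ and $\tfrac{1}{u_{i_0+1}} + \tfrac{1}{u_{i_0+3}} = \tfrac{1}{u_{j+1}} + \tfrac{1}{u_{j+3}}$), while the higher-order terms are tuned so that $N_{i_0+1} < N_{j+1}$ and $M_{i_0} < M_j$ hold simultaneously. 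Friesen's theorem then supplies a squarefree $D$ with the prescribed continued fraction. Your instinct that the delicate point is the comparison across indices, and that the power-series expansions are the tool for it, is exactly the content of Section~\ref{section counterexample}.
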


However, the expectation need not be true, i.e., we can have $N(\alpha_{j,t})>N(\alpha_{i_0,r})$ for some $j\neq i_0$, and then the conjecture may not hold. 

\begin{theorem}\label{counterexample}
Let $D=24\,009\,857\,226\,825\,282\,345\,490$. Then: 
\begin{enumerate}
\item $D\equiv 2\pmod 4$ is squarefree and its continued fraction is
$$\sqrt D=[u_0, \overline{10,2,12,6,1,3,4,3,12,3,4,2,1,6,12,2,10,2u_0}]$$
with $u_0=154\,951\,144\,645$.
\item $\alpha_2$ has the largest negative norm $-N=-N_2=-24\,548\,583\,881$
\item $\alpha_{7, 6}$ is the indecomposable integer with the largest norm $M_7=977\,608\,342\,706$
\item The smallest nonnegative rational integer $a$ such that $N$ divides $D-a^2$ is $a=4\,030\,160\,489$.
\item $977\,608\,342\,706=M_7>\frac{D-a^2}{N}=M_1=977\,393\,040\,249$
\end{enumerate}
Hence Conjecture \ref{conjecture} is false over $\mathbb Q(\sqrt D)$.
\end{theorem}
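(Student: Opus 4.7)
The plan is a direct computational verification in exact integer arithmetic (e.g.\ Mathematica), since the entire statement concerns one fixed integer $D$ of $23$ digits. First I would confirm $D\equiv 2\pmod 4$ (immediate from the last two digits) and squarefreeness (by trial factorisation of $D$). Then I would produce the continued fraction of $\sqrt D$ via the classical recursion $u_{i+1}=\lfloor(u_0+P_{i+1})/Q_{i+1}\rfloor$, $P_{i+1}=u_iQ_i-P_i$, $Q_{i+1}=(D-P_{i+1}^2)/Q_i$, and read off the period to verify (1). The convergents $p_i,q_i$ follow from the recurrences recalled in the preliminaries, yielding $N_i=|p_i^2-Dq_i^2|$ throughout one period; since $(N_i)$ is periodic of period $s$, scanning the even indices and taking the minimum gives (2).

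For (3) I would invoke Proposition~\ref{characterize indecomposable} together with Theorem~\ref{indecomposable estimate norm}(a). The first says that, up to conjugation and multiplication by units, every indecomposable has the form $\alpha_{i,r}$ with $i$ odd, $-1\le i\le s-2$, and $0\le r<u_{i+2}$; the second expresses $N(\alpha_{i,r})$ in terms of the already-tabulated $T_{i+1}$ and $N(\alpha_{i+1})$. A loop over this finite grid yields the maximum, and I would verify that it is attained at $(i,r)=(7,6)$; noting $u_9=12$, so $\lfloor u_9/2\rfloor=6$, this is exactly $M_7$. For (4) I would find $a$ either by direct search or by factoring $N$ and combining square roots modulo its prime-power factors. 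Part (5) is then an arithmetic comparison; the identification of $(D-a^2)/N$ with $M_1$ is immediate from Theorem~\ref{indecomposable estimate norm}(a) applied with $i=1$ and $r=\lfloor u_3/2\rfloor=6$, once one verifies that $|T_2+6\,N(\alpha_2)|=a$.

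There is no genuine analytic obstacle; every step is a standard algorithm on integers of modest size. The one conceptual point worth flagging is the finiteness justification---that checking a single period in $i$ exhausts both the negative norms and the indecomposables up to units---which follows because $\alpha_{s-1}$ is (a power of) the fundamental unit of $\mathbb Z[\sqrt D]$, so multiplication by it shifts the index by $s$ and preserves $|N|$. The real difficulty is not the verification but finding such a $D$ at all; that is the role of the power-series machinery developed in the later sections of the paper.
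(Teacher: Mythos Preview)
Your proposal is correct and follows essentially the same approach as the paper: a direct computer verification in Mathematica, using Proposition~\ref{characterize indecomposable} to reduce (2) and (3) to a finite scan over one period, and solving the congruence $a^2\equiv D\pmod N$ for (4). You even spell out a few points (the $(P_i,Q_i)$-recursion, the periodicity argument via the fundamental unit) that the paper leaves implicit.
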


\begin{proof}
These results are easily verified by a computation in Mathematica, the file with the computations is avalaible at \url{sites.google.com/site/vitakala/research/indec}.

(1) is straightforward. For (2), we know that the element with largest negative norm is some $\alpha_i$ with even $i$, $0\leq i\leq s=18$. Hence we just need to check these 10 possibilities.
(Note that the second largest is $\alpha_8$ with norm $-N_8=-24\,559\,791\,665$.)

By Proposition \ref{characterize indecomposable} we know that (up to multiplication by units and conjugation), $\alpha_{i, r}$ with odd $i$, $-1\leq i\leq s-2$ and $0\leq r<u_{i+2}$, are exactly the indecomposable integers. Again this is a small set of values that we need to check to prove (3)
(in fact, we could restrict it even more using Proposition \ref{heuristic r}).

(4) is obtained by solving the congruence $a^2\equiv D\pmod N$, and (5) then follows.
\end{proof}

Our goal in the rest of the paper is to give good estimates on the sizes of $N_{i}$ and $N(\alpha_{i, r})$, to use them to explain how we found the counterexample in Theorem \ref{counterexample}, and then to outline the argument that there are most likely infinitely many of them.

\begin{remark}
Note that the assumption that $\alpha$ is indecomposable is missing from the statement of Conjecture 1 in \cite{JK}. Also, in the discussion immediately preceding the conjecture, there should probably be ``$x+tN(p_{s-1})=a$".
\end{remark}

\section{Norms of convergents}

In this section we give an algorithm for expressing $N_i:=|N(\alpha_i)|$ as a power series in $u_0^{-1}, u_1^{-1},$ $u_2^{-1},\dots,$ $u_{s-1}^{-1}$ and compute the first few terms (Theorem \ref{power series ni}). For this purpose, we first prove recurrence relations for $1/c_i$ and $N_i$.

\begin{prop}\label{recurrence ni}
For $i\geq 0$ we have 

a) $$\frac{1}{c_i}=\frac{1}{u_i}\left(1-\frac{1}{u_ic_{i+1}}+\frac{1}{u_i^2c_{i+1}^2}-\dots\right)=\sum_{j=0}^{\infty}\frac{(-1)^j}{u_i^{j+1}c_{i+1}^j},$$ 

b) $$T_i=(-1)^{i+1}\sqrt D-N(\alpha_i)c_{i+1},$$ 

c) $$N_i=\frac{2\sqrt D}{c_{i+1}}-\frac{N_{i-1}}{c_{i+1}^2},$$ and

d) $$\frac{N_i}{c_{i+2}}<\sqrt D.$$
\end{prop}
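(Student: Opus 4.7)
The plan is to prove parts (a) through (d) in order, each feeding into the next. For (a), the recurrence $c_i = u_i + 1/c_{i+1}$ gives $1/c_i = (1/u_i)(1 + 1/(u_i c_{i+1}))^{-1}$, and a geometric series expansion produces the claim. Convergence is automatic: $u_i \geq 1$ and $c_{i+1} = u_{i+1} + 1/c_{i+2} > u_{i+1} \geq 1$ force $1/(u_i c_{i+1}) < 1$.

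For (b), the key step is to rewrite the listed identity $\sqrt D = (c_{i+1}p_i + p_{i-1})/(c_{i+1}q_i + q_{i-1})$. Clearing denominators and rearranging produces the clean relation $\alpha_{i-1}' = -c_{i+1}\alpha_i'$. Multiplying by $\alpha_i$ gives $\alpha_i\alpha_{i-1}' = -c_{i+1}N(\alpha_i)$. On the other hand, conjugating the listed identity $\alpha_{i-1}\alpha_i' = T_i + (-1)^{i-1}\sqrt D$ (a reindex of the formula in the setup) yields $\alpha_i\alpha_{i-1}' = T_i + (-1)^i\sqrt D$. Equating these two expressions and solving for $T_i$ gives (b).

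For (c), I would first derive $T_{i+1} = u_{i+1}N(\alpha_i) + T_i$ directly from the recurrences $p_{i+1} = u_{i+1}p_i + p_{i-1}$ and $q_{i+1} = u_{i+1}q_i + q_{i-1}$. Substituting the expressions for $T_i$ and $T_{i+1}$ from (b), using $c_{i+1} - u_{i+1} = 1/c_{i+2}$, and converting $N(\alpha_j)$ to $(-1)^{j+1}N_j$ throughout, yields $N_{i+1}c_{i+2} = 2\sqrt D - N_i/c_{i+2}$; dividing by $c_{i+2}$ and shifting $i+1 \mapsto i$ gives the stated recursion. For (d), (c) implies $N_i \leq 2\sqrt D/c_{i+1}$ (the subtracted term is nonnegative), so the claim $N_i/c_{i+2} < \sqrt D$ reduces to $c_{i+1}c_{i+2} > 2$. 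This is a one-liner: $c_{i+1}c_{i+2} = (u_{i+1} + 1/c_{i+2})c_{i+2} = u_{i+1}c_{i+2} + 1$, and $u_{i+1} \geq 1$ together with $c_{i+2} > u_{i+2} \geq 1$ gives $u_{i+1}c_{i+2} > 1$.

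The main step is (b) — once the identity $\alpha_{i-1}' = -c_{i+1}\alpha_i'$ is in hand, the rest is bookkeeping. The only real hazard is sign juggling in (c): the alternating signs from $N_j = (-1)^{j+1}N(\alpha_j)$ and from $\alpha_j\alpha_{j+1}' = T_{j+1} + (-1)^j\sqrt D$ must be combined with care, since any single sign error scrambles the recursion.
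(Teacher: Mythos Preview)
Your proof is correct and follows essentially the same route as the paper. The only cosmetic difference is in (b): you package the key relation as $\alpha_{i-1}'=-c_{i+1}\alpha_i'$ and then invoke the listed identity $\alpha_i\alpha_{i+1}'=T_{i+1}+(-1)^i\sqrt D$ via reindexing and conjugation, whereas the paper rationalises the expression for $c_{i+1}$ directly; these are the same computation in different clothing.
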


\begin{proof}
From the definition of $c_i$ we have
$$c_i=[u_i, u_{i+1}, u_{i+2}, \dots]=u_i+\frac {1}{[u_{i+1}, u_{i+2}, \dots]}=u_i+\frac 1{c_{i+1}}.$$

To prove a), we take the reciprocal of this formula and obtain
$$\frac{1}{c_i}=\frac{1}{u_i+\frac 1{c_{i+1}}}=\frac{1}{u_i}\cdot\frac{1}{1+\frac{1}{u_ic_{i+1}}}=
\frac{1}{u_i}\cdot\sum_{j=0}^{\infty}(-1)^j\frac{1}{u_i^{j}c_{i+1}^j}.$$
Since $u_ic_{i+1}=u_i(u_{i+1}+\frac 1{c_{i+2}})\geq 1\cdot (1+\frac 1{c_{i+2}})>1$, the series converges absolutely.

For the second formula b), note that since 
$\sqrt D=\frac{c_{i+1}p_i+p_{i-1}}{c_{i+1}q_i+q_{i-1}}$,
we have $c_{i+1}(p_i-q_i\sqrt D)=-p_{i-1}+q_{i-1}\sqrt D$. This in turn implies
$$c_{i+1}=\frac{-p_{i-1}+q_{i-1}\sqrt D}{p_i-q_i\sqrt D}=
\frac{(p_i+q_i\sqrt D)(-p_{i-1}+q_{i-1}\sqrt D)}{(p_i+q_i\sqrt D)(p_i-q_i\sqrt D)}=
-\frac{T_i+(-1)^i\sqrt D}{N(\alpha_i)},$$
i.e.,
$T_i=(-1)^{i+1}\sqrt D-N(\alpha_i)c_{i+1}$.

To prove c), we can also express
$$T_{i+1}=p_{i+1}p_i-q_{i+1}q_iD=(u_{i+1}p_i+p_{i-1})p_i-(u_{i+1}q_i+q_{i-1})q_iD=
u_{i+1}N(\alpha_i)+T_i.$$
Plugging in the expression for $T_i$ above, we see that
$$T_{i+1}=(-1)^{i+1}\sqrt D-N(\alpha_i)(c_{i+1}-u_{i+1})=
(-1)^{i+1}\sqrt D-\frac{N(\alpha_i)}{c_{i+2}}.$$

Finally, $$(-1)^{i+1}\sqrt D-N(\alpha_{i})c_{i+1}=T_{i}=
(-1)^{i}\sqrt D-\frac{N(\alpha_{i-1})}{c_{i+1}},$$
and hence
$$N(\alpha_{i})c_{i+1}=
2\cdot (-1)^{i+1}\sqrt D+\frac{N(\alpha_{i-1})}{c_{i+1}}.$$

This finishes the proof of c) using the definition $N_j=(-1)^{j+1}N(\alpha_j)$ for $j=i, i+1$.

d) Using c), we see that
$$\frac{N_i}{c_{i+2}}<\frac{2\sqrt D}{c_{i+1}c_{i+2}}=
\frac{2\sqrt D}{(u_{i+1}+1/c_{i+2})c_{i+2}}=\frac{2\sqrt D}{u_{i+1}c_{i+2}+1}<\sqrt D.$$
\end{proof}

Recursively using the formulas from Proposition \ref{recurrence ni}, one can obtain the desired power series expression for $N_i$ as Theorem \ref{power series ni}. 
The only term of total degree 1 (in $u_0^{-1}, u_1^{-1}, u_2^{-1}, \dots, u_{s-1}^{-1}$) will be $\frac{2\sqrt D}{u_{i+1}}$, which corresponds to the well-known approximation
$N_i\approx\frac{2\sqrt D}{u_{i+1}}$ (see eg. \cite[Proposition 3.3]{Ka}).
In fact, we can improve this estimate as follows (it is an improvement, as $c_{i+1}=u_{i+1}+1/c_{i+2}>u_{i+1}$).

\begin{prop}\label{estimate ni}
For $i\geq 0$ we have
$$\frac{2\sqrt D}{c_{i+1}}\left(1-\frac 1{c_ic_{i+1}}\right)<N_i<\frac{2\sqrt D}{c_{i+1}}.$$
\end{prop}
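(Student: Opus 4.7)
The plan is to extract both inequalities from the recurrence
$$N_i=\frac{2\sqrt D}{c_{i+1}}-\frac{N_{i-1}}{c_{i+1}^2}$$
established in Proposition \ref{recurrence ni}(c), bootstrapping the (easy) upper bound into the lower bound.

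For the upper bound, I observe that $N_{i-1}>0$: indeed $N_j=|p_j^2-Dq_j^2|$ and $p_j^2-Dq_j^2=0$ would force $\sqrt D$ to be rational, contradicting that $D$ is squarefree (and for $j=-1$ we have $N_{-1}=1$ directly). Thus the recurrence gives $N_i<\frac{2\sqrt D}{c_{i+1}}$ immediately, for every $i\geq 0$.

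For the lower bound, I would substitute this same upper bound, with the index shifted down by one, into the recurrence. Provided $i\geq 1$, the upper bound already proved for index $i-1$ reads $N_{i-1}<\frac{2\sqrt D}{c_i}$, so
$$N_i=\frac{2\sqrt D}{c_{i+1}}-\frac{N_{i-1}}{c_{i+1}^2}>\frac{2\sqrt D}{c_{i+1}}-\frac{2\sqrt D}{c_i c_{i+1}^2}=\frac{2\sqrt D}{c_{i+1}}\left(1-\frac{1}{c_ic_{i+1}}\right).$$
The case $i=0$ must be handled as a base case, but it is essentially free: $\alpha_{-1}=p_{-1}+q_{-1}\sqrt D=1$ gives $N_{-1}=1$, and $c_0=u_0+1/c_1=\sqrt D$, so the required estimate $N_{-1}<\frac{2\sqrt D}{c_0}=2$ is trivial.

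There is no real obstacle here; the argument is essentially one line once the recurrence (c) is in hand. The only things to monitor are the index bookkeeping in the substitution and the small sanity check that the factor $1-\tfrac{1}{c_ic_{i+1}}$ is positive, which follows from $c_i\geq u_i+1/c_{i+1}>1$ for $i\geq 1$ and from $c_0 c_1=\sqrt D\cdot c_1>1$ for $i=0$, so the lower bound is nontrivial in every case.
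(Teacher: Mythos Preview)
Your argument is correct and is essentially identical to the paper's own proof: both derive the upper bound directly from the recurrence of Proposition~\ref{recurrence ni}(c) using $N_{i-1}>0$, and then feed that upper bound (at index $i-1$) back into the same recurrence to obtain the lower bound. Your treatment is in fact slightly more careful than the paper's, since you explicitly check the base case $i=0$ via $N_{-1}=1<2=\frac{2\sqrt D}{c_0}$ and verify that $1-\tfrac{1}{c_ic_{i+1}}>0$.
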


\begin{proof}
The upper bound follows directly from Proposition \ref{recurrence ni}c), as we have 
$N_i=\frac{2\sqrt D}{c_{i+1}}-\frac{N_{i-1}}{c_{i+1}^2}<\frac{2\sqrt D}{c_{i+1}}.$

To prove the lower bound, we apply the upper bound to Proposition \ref{recurrence ni}c) again
$$N_i=\frac{2\sqrt D}{c_{i+1}}-\frac{N_{i-1}}{c_{i+1}^2}>
\frac{2\sqrt D}{c_{i+1}}-\frac{2\sqrt D}{c_{i}}\cdot\frac{1}{c_{i+1}^2}.$$
\end{proof}

To be able to estimate the error of the estimates, we first need to estimate the errors (coming from truncating the series) when applying the formulas from Proposition \ref{recurrence ni}. This is routine, but somewhat technical.

\begin{lemma}\label{estimate error}

a) For every $k\geq 1$ there is $-1\leq \ve\leq 1$ such that  
$$\frac{1}{c_i}=\sum_{j=0}^{k-1}\frac{(-1)^j}{u_i^{j+1}c_{i+1}^j}+\frac{\ve}{u_i^{k+1}u_{i+1}^k}.$$

b)  
For some $-1\leq \ve\leq 1$ we have
$$\frac{1}{c_i}=\frac{1}{u_i}\left(1-\frac{1}{u_iu_{i+1}}\right)+
\frac{1}{u_i^2u_{i+1}^2}\left(\frac{1}{u_{i}}+\frac{1}{u_{i+2}}\right)\ve.$$

c) For some $-1\leq \ve\leq 1$ we have
$$\frac{1}{c_i^2}=\frac{1}{u_i^2}+\frac{2}{u_i^3u_{i+1}}\cdot\ve.$$

d) For some $-1\leq \ve\leq 1$ we have $$\frac{1}{c_i^2}=\frac{1}{u_i^2}\left(1-\frac{2}{u_iu_{i+1}}\right)+\frac{2}{u_i^4u_{i+1}^2}\cdot\ve.$$
\end{lemma}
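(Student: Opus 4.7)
The plan is to derive all four parts from the geometric-series expansion of Proposition \ref{recurrence ni}(a), together with careful bookkeeping of signs and error terms. For (a), the key observation is that since $u_i c_{i+1} \geq u_i(u_{i+1} + 1/c_{i+2}) > 1$, the series $\sum_{j \geq 0} (-1)^j / (u_i^{j+1} c_{i+1}^j)$ is alternating with strictly decreasing absolute values. The standard alternating-series estimate then bounds the truncation error after $k$ terms by the first omitted term $1/(u_i^{k+1} c_{i+1}^k)$, and replacing $c_{i+1}$ by the smaller $u_{i+1}$ in this bound yields the claimed $1/(u_i^{k+1} u_{i+1}^k)$.

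For (b), I would first apply (a) with $k = 2$ at index $i$, and then re-expand the resulting $1/c_{i+1}$ factor using (a) with $k = 1$ at index $i+1$. Substituting yields
\begin{displaymath}
\frac{1}{c_i} = \frac{1}{u_i}\left(1 - \frac{1}{u_i u_{i+1}}\right) + \frac{1}{u_i^2 u_{i+1}^2}\left(\frac{\varepsilon_1}{u_i} - \frac{\varepsilon_2}{u_{i+2}}\right),
\end{displaymath}
with $\varepsilon_1, \varepsilon_2 \in [-1, 1]$, and the triangle inequality packages the final bracket into a single factor $\varepsilon(1/u_i + 1/u_{i+2})$ with $|\varepsilon| \leq 1$.

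For (c), the crucial refinement is that (a) with $k = 1$ actually gives $\varepsilon_1 \in [-1, 0]$: the tail of the alternating series is sandwiched between its (negative) first omitted term $-1/(u_i^2 c_{i+1})$ and $0$. Squaring the expansion $1/c_i = 1/u_i + \varepsilon_1/(u_i^2 u_{i+1})$ produces
\begin{displaymath}
\frac{1}{c_i^2} = \frac{1}{u_i^2} + \frac{2}{u_i^3 u_{i+1}}\left(\varepsilon_1 + \frac{\varepsilon_1^2}{2 u_i u_{i+1}}\right),
\end{displaymath}
and the bracket lies in $[-1, 1/2]$ because $\varepsilon_1 \leq 0$ while $0 \leq \varepsilon_1^2/(2 u_i u_{i+1}) \leq 1/2$. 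For (d) I would proceed analogously by squaring the expansion of (b), or equivalently by expanding $1/c_i^2 = 1/(u_i^2(1+z)^2)$ with $z = 1/(u_i c_{i+1})$ through quadratic order in $z$ and re-expanding $1/c_{i+1}$ by (a); collecting the resulting error terms and again exploiting sign information from the alternating-series truncations gives the stated bound.

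The main obstacle is the delicate bookkeeping in (c) and (d): a naive application of the triangle inequality after squaring would introduce cross-terms of the wrong size, so one must use the one-sided information that certain $\varepsilon_j$ are non-positive in order to keep $|\varepsilon| \leq 1$. Part (d) is the most technical of the four, as several error contributions at different orders in the $u_j$'s need to be simultaneously controlled, but I expect the proof, like the rest of the lemma, to reduce to a routine if somewhat tedious estimation once the correct sign bookkeeping is in place.
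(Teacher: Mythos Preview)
Your treatment of (a) and (b) matches the paper's: both truncate the geometric series of Proposition~\ref{recurrence ni}(a), bound the tail by its first term, and then iterate once to pass from $c_{i+1}$ to $u_{i+1}$ (and $u_{i+2}$).

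For (c) and (d) the paper takes a different route. Rather than squaring an expansion of $1/c_i$, it writes
\[
\frac{1}{c_i^2}=\frac{1}{u_i^2}\cdot\frac{1}{(1+z)^2}
=\frac{1}{u_i^2}\sum_{j\ge 0}\frac{(-2)^j}{u_i^jc_{i+1}^j}\Bigl(1+\frac{1}{2u_ic_{i+1}}\Bigr)^j,
\qquad z=\frac{1}{u_ic_{i+1}},
\]
i.e.\ a geometric series in $w=2z(1+z/2)$, and then truncates. Your squaring argument for (c) is a perfectly valid alternative and the sign bookkeeping you give ($\ve_1\in[-1,0]$, hence bracket in $[-1,\tfrac12]$) is correct.

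For (d), however, your first suggestion---squaring the expansion from (b)---does not yield the stated bound. The error term in (b) already carries a factor $1/u_{i+2}$, and after squaring, the cross term $2AB$ produces a contribution of order $1/(u_i^3u_{i+1}^2u_{i+2})$. When $u_{i+2}$ is small and $u_i$ is large this is much bigger than the claimed $u_{i+2}$--free bound $2/(u_i^4u_{i+1}^2)$, so no amount of sign bookkeeping will rescue it. The paper avoids this by working entirely with $c_{i+1}$ (never re-expanding $1/c_{i+1}$ via (a)) and only invoking $c_{i+1}>u_{i+1}$ at the end; this is essentially your second alternative, \emph{without} the ``re-expanding $1/c_{i+1}$ by (a)'' step, which would reintroduce the same $u_{i+2}$--dependence.
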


\begin{proof}
a) By Proposition \ref{recurrence ni}, we have
$$\frac{1}{c_i}=\sum_{j=0}^{k-1}\frac{(-1)^j}{u_i^{j+1}c_{i+1}^j}\pm\frac{1}{u_i^{k+1}c_{i+1}^k}\cdot \sum_{j=0}^{\infty}\frac{(-1)^j}{u_i^{j}c_{i+1}^j}=
\sum_{j=0}^{k-1}\frac{(-1)^j}{u_i^{j+1}c_{i+1}^j}\pm \frac{1}{u_i^{k+1}c_{i+1}^k}\cdot\frac{1}{1+\frac{1}{u_ic_{i+1}}},$$
and so the error satisfies 
$$\left\lvert\pm\frac{1}{u_i^{k+1}c_{i+1}^k}\cdot\frac{1}{1+\frac{1}{u_ic_{i+1}}}\right\rvert\leq
\frac{1}{u_i^{k+1}u_{i+1}^k}\cdot 1.$$

b) follows from a) with $k=2$ by estimating the term $1/c_{i+1}$ again using a) with $k=1$.

c), d) We have $\frac{1}{c_i}=\frac{1}{u_i}\cdot\frac{1}{1+\frac{1}{u_ic_{i+1}}}$, and so
$$\frac{1}{c_i^2}=\frac{1}{u_i^2}\cdot\frac{1}{1+\frac{2}{u_ic_{i+1}}\left(1+\frac{1}{2u_ic_{i+1}}\right)}=\frac{1}{u_i^2}\sum_{j=0}^{\infty}\frac{(-2)^j}{u_i^jc_{i+1}^j}\left(1+\frac{1}{2u_ic_{i+1}}\right)^j.$$

From here c) and d) follow similarly as in the proof of part a).
\end{proof}

We are finally ready to give the desired expansion for $N_i$.
In the next section we shall need to consider the terms of degree at most 5. Especially when the coefficients $u_j$ are not too small, this gives us very good information on the approximate size of $N_i$. 
On the other hand, when eg. $u_i=u_{i+1}=u_{i+2}=1$, the formulas are nearly useless.

\begin{theorem}\label{power series ni}

a) Degree 1: For some $0< \ve\leq 1$ we have 
$$\frac {N_i}{2\sqrt D}=\frac 1{u_{i+1}}-
\frac 1{u_{i+1}^2}\left(\frac 1{u_i}+\frac 1{u_{i+2}}\right)\ve.$$

b) Degree 3: Assume that $u_{i-1}, u_i, u_{i+1}, u_{i+2}, u_{i+3}\geq u$ for some $u\in\mathbb N$. Then there is some $-1\leq \ve\leq 1$ such that
$$\frac {N_i}{2\sqrt D}=\frac{1}{u_{i+1}}
\left(1-\frac{1}{u_iu_{i+1}}-\frac{1}{u_{i+1}u_{i+2}}\right)+\frac{10}{u^5}\cdot\ve.$$

c) Degree 5: We have
\begin{displaymath}
\begin{split}
N_i= \frac{2\sqrt D}{u_{i+1}}
&\left[  1-\frac{1}{u_iu_{i+1}}-\frac{1}{u_{i+1}u_{i+2}}+
\left(\frac{1}{u_iu_{i+1}}+\frac{1}{u_{i+1}u_{i+2}}\right)^2+ \right. \\
&\left. \ +\frac{1}{u_{i-1}u_{i}^2u_{i+1}}+\frac{1}{u_{i+1}u_{i+2}^2u_{i+3}}\right]+\cdots,
\end{split}
\end{displaymath}
where $\cdots$ stands for terms of total degree greater than 5. 
If $u\in\mathbb N$ is such that $u_j\geq u$ for all $j$, then the error satisfies $|\cdots|<\frac {65}{u^7}$.
\end{theorem}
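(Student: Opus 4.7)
The plan is to apply the recurrence $N_i = \frac{2\sqrt D}{c_{i+1}} - \frac{N_{i-1}}{c_{i+1}^2}$ from Proposition \ref{recurrence ni}c) iteratively and substitute the truncated expansions of $1/c_j$ and $1/c_j^2$ from Lemma \ref{estimate error}. Part (a) will need no iteration and only Proposition \ref{estimate ni}; part (b) will need one application of the recurrence together with Lemma \ref{estimate error}b) and c); part (c) will need two applications together with all parts of Lemma \ref{estimate error}.

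For (a), the upper bound $N_i/(2\sqrt D) < 1/c_{i+1} \le 1/u_{i+1}$ is immediate from Proposition \ref{estimate ni}. For the lower bound, I would use that $c_{i+1} \le u_{i+1}+1/u_{i+2}$ to get $1/c_{i+1} \ge 1/u_{i+1} - 1/(u_{i+1}^2 u_{i+2})$, and $c_i c_{i+1}^2 \ge u_i u_{i+1}^2$ to get $1/(c_i c_{i+1}^2) \le 1/(u_i u_{i+1}^2)$; inserting both into Proposition \ref{estimate ni} produces exactly the claimed formula with $0 < \ve \le 1$.

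For (b), I would substitute the recurrence and expand the two pieces separately. Lemma \ref{estimate error}b) applied to $1/c_{i+1}$ yields the first two main terms $1/u_{i+1} - 1/(u_{i+1}^2 u_{i+2})$ together with an error of order $u^{-5}$, while $N_{i-1}/(2\sqrt D c_{i+1}^2)$ is handled by combining part (a) for $N_{i-1}/(2\sqrt D)$ with Lemma \ref{estimate error}c) for $1/c_{i+1}^2$, yielding leading behaviour $1/(u_i u_{i+1}^2)$. Collecting and factoring out $1/u_{i+1}$ produces the advertised main part; under the hypothesis $u_{i-1},\dots,u_{i+3} \ge u$ each remaining error piece is bounded by an explicit small constant over $u^5$, and summing the constants gives $10/u^5$.

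For (c), I would iterate the recurrence one more time to write $N_i = \frac{2\sqrt D}{c_{i+1}} - \frac{2\sqrt D}{c_i c_{i+1}^2} + \frac{N_{i-2}}{c_i^2 c_{i+1}^2}$, and then expand each term to total degree 5: the first by iterating Lemma \ref{estimate error}a); the second by writing $1/(c_i c_{i+1}^2) = (1/c_i)(1/c_{i+1}^2)$ and using Lemma \ref{estimate error}b) and d); the third by combining part (a) for $N_{i-2}/(2\sqrt D)$ with Lemma \ref{estimate error}c) for $1/c_i^2$ and $1/c_{i+1}^2$, which produces the degree-5 contribution $1/(u_{i-1} u_i^2 u_{i+1}^2)$. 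A short symbolic collection of the resulting monomials should reproduce the bracketed formula, including the two asymmetric end-cross-terms $1/(u_{i-1}u_i^2 u_{i+1})$ and $1/(u_{i+1}u_{i+2}^2 u_{i+3})$ coming from the outermost truncations of $1/c_i$ and $1/c_{i+1}$. The main obstacle here is purely technical: controlling the constant $65$ requires keeping an explicit bound on every error piece (each of the form $c/u^7$ for some small $c$) and verifying that the sum of these constants stays below $65$, which is exactly the kind of finite bookkeeping the introduction describes as routine.
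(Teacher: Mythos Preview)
Your proposal is correct and follows essentially the same approach as the paper: for (a) you use Proposition~\ref{estimate ni} together with the obvious inequalities $c_j\ge u_j$ and $c_{i+1}\le u_{i+1}+1/u_{i+2}$, which is equivalent to the paper's computation of $\bigl|N_i/(2\sqrt D)-1/u_{i+1}\bigr|$ via Proposition~\ref{recurrence ni}c) and Lemma~\ref{estimate error}a); for (b) your expansion of $1/c_{i+1}$ by Lemma~\ref{estimate error}b) and of $N_{i-1}/(2\sqrt D c_{i+1}^2)$ by part (a) together with Lemma~\ref{estimate error}c) is exactly what the paper does, with the same $2/u^5+2/u^5+2/u^5+4/u^7\le 10/u^5$ bookkeeping. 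For (c) the paper gives no details beyond ``similar, only more technical'', and your plan of iterating the recurrence once more to reach $N_{i-2}$ and expanding each of the three resulting pieces to total degree~5 is a correct way to carry this out.
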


\begin{proof}
This is a routine repeated application of the formulas from Propositions \ref{recurrence ni}, \ref{estimate ni}, and estimates from Lemma \ref{estimate error}, and so we only illustrate it by proving parts a) and b), i.e., by a computation till degree 3:

a) By Proposition \ref{recurrence ni}c), Lemma \ref{estimate error}a) for $k=1$, and Proposition \ref{estimate ni} we have:
$$\left\lvert\frac {N_i}{2\sqrt D}-\frac 1{u_{i+1}}\right\rvert\leq 
\left\lvert\frac 1{c_{i+1}}-\frac 1{u_{i+1}}\right\rvert+
\left\lvert\frac{N_{i-1}}{2\sqrt D}\cdot\frac{1}{c_{i+1}^2}\right\rvert\leq
\frac 1{u_{i+1}^2u_{i+2}}+ \frac{1}{u_i c_{i+1}^2}\leq \frac 1{u_{i+1}^2u_{i+2}}+\frac 1{u_iu_{i+1}^2}.$$
By Proposition \ref{estimate ni}, we see that the error has to be negative.

b) We first estimate all the $u_j$ in the error terms by $u$, and then repeatedly apply Proposition \ref{recurrence ni}c), Lemma \ref{estimate error}, and part a) of this theorem as follows:
\begin{displaymath}
\begin{split}
\frac{N_i}{2\sqrt D} & = 
\frac{1}{c_{i+1}}-\frac{N_{i-1}}{2\sqrt D}\cdot\frac{1}{c_{i+1}^2}=\\
&= 
\frac{1}{u_{i+1}}\left(1-\frac{1}{u_{i+1}u_{i+2}}\right)+\frac{2\ve_1}{u^5}+
\left(\frac 1{u_{i}}+\frac{2\ve_2}{u^3}\right)\cdot \left(\frac{1}{u_{i+1}^2}+\frac{2\ve_3}{u^4}\right)=\\
&= 
\frac{1}{u_{i+1}}
\left(1-\frac{1}{u_iu_{i+1}}-\frac{1}{u_{i+1}u_{i+2}}\right)+
\frac{2\ve_1}{u^5}+\frac 1{u_{i}}\cdot \frac{2\ve_3}{u^4}+\frac{2\ve_2}{u^3}\cdot\frac{1}{u_{i+1}^2}+ \frac{2\ve_2}{u^3}\cdot\frac{2\ve_3}{u^4},
\end{split}
\end{displaymath}
and we see that the absolute value of the error is less than $\frac{2}{u^5}+\frac{2}{u^5}+\frac{2}{u^5}+\frac{4}{u^7}\leq \frac{10}{u^5}$.

The proof of c) is similar, only more technical. 
\end{proof}

Note that we can also use these results to give a simple proof of Proposition 1 from \cite{JK}, which says that there is an element of norm $\frac{D-a^2}{N}$ for some $a\in\mathbb Z$, $|a|\leq N/2$. Jang-Kim study the prime factorization of $\alpha_{i+1}$ to prove this, but it follows directly by combining Theorem \ref{indecomposable estimate norm} with Proposition \ref{recurrence ni}.

\begin{prop}\cite[Proposition 1]{JK}
Let $i$ be such that $N_{i+1}=N$. Then there is some $0\leq r\leq u_{i+2}$ such that $N(\alpha_{i,r})=\frac{D-a^2}{N}$ for some $a\in\mathbb Z$, $|a|\leq N/2$.
\end{prop}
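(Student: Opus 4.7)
The plan is to combine Theorem~\ref{indecomposable estimate norm}(a) with parts (b), (c), (d) of Proposition~\ref{recurrence ni} to pin down an admissible $r$ directly. Since $N$ is defined as the minimum over \emph{even} indices and $N=N_{i+1}$, the index $i+1$ must be even, so $i$ is odd and $N(\alpha_{i+1})=-N$. Put $a_r:=T_{i+1}+rN(\alpha_{i+1})=T_{i+1}-rN$; by Theorem~\ref{indecomposable estimate norm}(a),
$$N(\alpha_{i,r})=\frac{D-a_r^2}{N},$$
so it suffices to exhibit $r\in\{0,1,\dots,u_{i+2}\}$ with $|a_r|\le N/2$, since then the claim holds with $a=|a_r|$.

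The key observation is that $(a_r)_r$ is an arithmetic progression with common difference $-N$. Hence, if the two endpoints $a_0=T_{i+1}$ and $a_{u_{i+2}}$ have opposite signs, some intermediate $r$ necessarily lands in the interval $[-N/2,N/2]$. So the entire proof reduces to checking these two endpoint signs.

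For this I plan to apply Proposition~\ref{recurrence ni}(b), which, using that $i$ is odd, gives $T_{i+1}=-\sqrt D+Nc_{i+2}$; substituting $c_{i+2}=u_{i+2}+1/c_{i+3}$ then yields $a_{u_{i+2}}=N/c_{i+3}-\sqrt D$. Positivity of $a_0=Nc_{i+2}-\sqrt D$ follows by combining Proposition~\ref{recurrence ni}(c) at $j=i+1$, namely $Nc_{i+2}=2\sqrt D-N_i/c_{i+2}$, with Proposition~\ref{recurrence ni}(d) at index $i$, which gives $N_i/c_{i+2}<\sqrt D$, so $Nc_{i+2}>\sqrt D$. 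Negativity of $a_{u_{i+2}}$ is immediate from Proposition~\ref{recurrence ni}(d) at index $i+1$: $N/c_{i+3}=N_{i+1}/c_{i+3}<\sqrt D$.

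I do not anticipate any real obstacle; the only step requiring a bit of care is the endpoint sign verification, and both inequalities come straight from the recurrence relations in Proposition~\ref{recurrence ni}. The rest is a direct combination with the formula in Theorem~\ref{indecomposable estimate norm}(a), exactly in the spirit of the remark the authors make just before the proposition.
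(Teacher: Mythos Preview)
Your proof is correct and follows essentially the same route as the paper: both arguments invoke Theorem~\ref{indecomposable estimate norm}(a) to reduce to finding $r$ with $|T_{i+1}-rN|\le N/2$, and then verify the endpoint signs $T_{i+1}>0>T_{i+1}-u_{i+2}N$ using exactly parts (b), (c), (d) of Proposition~\ref{recurrence ni} in the same way. The only cosmetic difference is that the paper phrases the existence of $r$ as ``take the integer minimizing $|T_{i+1}-rN|$'' rather than your arithmetic-progression argument, but these are equivalent.
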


\begin{proof}
Clearly $i$ is odd, and by Theorem \ref{indecomposable estimate norm} we know that 
$$N(\alpha_{i,r})=\frac{D-(T_{i+1}-rN)^2}{N}.$$
If we take $r_1$ to be the integer for which $|T_{i+1}-rN|$ is minimal, then 
$|T_{i+1}-r_1 N|\leq N/2$ as we want. 
So we only need to check that such $r_1$ lies in the specified interval $0\leq r\leq u_{i+2}$. This will follow if we show that $T_{i+1}-0\cdot N>0>T_{i+1}-u_{i+2} N$, which is easy to see using Proposition \ref{recurrence ni}:

$r=0$: We have $$T_{i+1}-0\cdot N=-\sqrt D+N_{i+1}c_{i+2}=\sqrt D-\frac{N_i}{c_{i+2}}>0,$$
where we first used \ref{recurrence ni}b), then \ref{recurrence ni}c), and finally \ref{recurrence ni}d).

$r=u_{i+2}$: Similarly, we have
$$T_{i+1}-u_{i+2} N=-\sqrt D+N_{i+1}(c_{i+2}-u_{i+2})=-\sqrt D+\frac{N_{i+1}}{c_{i+3}}<0.$$

This finishes the proof (in fact, we shall see in Proposition \ref{heuristic r} that $\frac{u_{i+2}}2-1<r_1<\frac{u_{i+2}}2+1$).
\end{proof}

\section{Norms of indecomposable elements}\label{section counterexample}

In this section we determine for which value of $r$ the indecomposable element $\alpha_{i, r}$ (with $i$ fixed) has maximal norm (Proposition \ref{heuristic r}).
Then we prove a power series formula for this norm, similar to Theorem \ref{power series ni}.
This will give us a heuristic for finding counterexamples to the Conjecture \ref{conjecture} of Jang-Kim. 

\

Assume that $i$ is odd so that $\alpha_{i, r}\succ 0$ is indecomposable for all $0\leq r\leq u_{i+2}$.
Let's first determine which value of $r$ maximizes the norm of $\alpha_{i, r}$.

\begin{prop}\label{heuristic r}
Assume that $i$ is odd and let $r_0$ be such that $N(\alpha_{i,r})$ is maximal among $0\leq r\leq u_{i+2}$. Then 
$\frac{u_{i+2}}2-1<r_0<\frac{u_{i+2}}2+1.$

If $u_{i+2}$ is even, then $$r_0=\frac{u_{i+2}}2.$$
\end{prop}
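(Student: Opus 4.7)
My plan is to reduce the claim to showing that the real maximizer $r^* := T_{i+1}/N_{i+1}$ of $N(\alpha_{i,r})$ lies within $1/2$ of $u_{i+2}/2$, and then to chase this inequality through the identities in Proposition \ref{recurrence ni}. Since $i$ is odd, $N(\alpha_{i+1}) = -N_{i+1}$ is negative, and Theorem \ref{indecomposable estimate norm}a) rewrites
$$N(\alpha_{i,r}) = \frac{D - (T_{i+1} - r N_{i+1})^2}{N_{i+1}} = \frac{D}{N_{i+1}} - N_{i+1}\left(r - \frac{T_{i+1}}{N_{i+1}}\right)^2$$
as a strictly concave quadratic in $r$ with real vertex at $r^*$. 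The inequalities $0 < T_{i+1}$ and $T_{i+1} < u_{i+2} N_{i+1}$ shown in the proof of the preceding proposition (and valid for every odd $i$, since they only use Proposition \ref{recurrence ni} applied to $N_{i+1}$) locate $r^*$ in $(0, u_{i+2})$. Hence the integer maximizer $r_0 \in \{0,1,\dots,u_{i+2}\}$ is simply the nearest integer to $r^*$, so $|r_0 - r^*| \le 1/2$, and the main claim will follow from $|r^* - u_{i+2}/2| < 1/2$, equivalently $|2T_{i+1} - u_{i+2} N_{i+1}| < N_{i+1}$.

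To simplify the left-hand side I would use Proposition \ref{recurrence ni}b) (which for odd $i$ reads $T_{i+1} = -\sqrt D + N_{i+1} c_{i+2}$), the defining identity $u_{i+2} = c_{i+2} - 1/c_{i+3}$, and Proposition \ref{recurrence ni}c) in the form $N_{i+1} c_{i+2} = 2\sqrt D - N_i/c_{i+2}$. A short expansion collapses everything to the key identity
$$2T_{i+1} - u_{i+2} N_{i+1} = \frac{N_{i+1}}{c_{i+3}} - \frac{N_i}{c_{i+2}}.$$
Both summands on the right are positive, so the absolute value is bounded by the larger one, and I would finish with a two-case analysis. If $N_{i+1}/c_{i+3} \ge N_i/c_{i+2}$, the quantity is at most $N_{i+1}/c_{i+3} < N_{i+1}$ since $c_{i+3} > 1$. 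Otherwise I need $N_i/c_{i+2} - N_{i+1}/c_{i+3} < N_{i+1}$; plugging in the strict upper bound $N_i < 2\sqrt D/c_{i+1}$ and the strict lower bound $N_{i+1} > (2\sqrt D/c_{i+2})(1 - 1/(c_{i+1}c_{i+2}))$ from Proposition \ref{estimate ni} and clearing denominators reduces matters to the elementary inequality
$$c_{i+2}c_{i+3}(c_{i+1}-1) + c_{i+1} c_{i+2} - c_{i+3} - 1 \ge 0,$$
which holds (with value $\ge c_{i+2} > 0$) because $c_{i+2}(c_{i+1}-1) \ge 1$ and $c_{i+1} c_{i+2} \ge c_{i+2} + 1$, both coming from $u_{i+1} \ge 1$.

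The main obstacle is not any single step but rather keeping the identities of Proposition \ref{recurrence ni} straight so that the cancellations producing the key identity come out cleanly; once that identity is in hand the two inequality checks are routine. The ``$u_{i+2}$ even'' refinement is then immediate, as the only integer lying in the open interval $(u_{i+2}/2 - 1, u_{i+2}/2 + 1)$ of length $2$ is $u_{i+2}/2$ itself.
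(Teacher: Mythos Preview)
Your proof is correct and follows essentially the same approach as the paper: both reduce to the key inequality $\left|\frac{T_{i+1}}{N_{i+1}}-\frac{u_{i+2}}{2}\right|<\frac 12$, use Proposition~\ref{recurrence ni}b),c) to rewrite $2T_{i+1}-u_{i+2}N_{i+1}$, and then verify the two one-sided inequalities via Proposition~\ref{estimate ni} together with $c_{i+1}c_{i+2}\ge c_{i+2}+1$. The only cosmetic difference is that you push the substitution one step further to obtain the clean identity $2T_{i+1}-u_{i+2}N_{i+1}=\frac{N_{i+1}}{c_{i+3}}-\frac{N_i}{c_{i+2}}$, which makes the two cases a bit more transparent than the paper's formulation $-2\sqrt D+N_{i+1}\bigl(c_{i+2}+\tfrac{1}{c_{i+3}}\bigr)$, but the underlying estimates are identical.
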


\begin{proof}
By Theorem \ref{indecomposable estimate norm}a) we have 
$$N(\alpha_{i,r})=\frac{D-(T_{i+1}+rN(\alpha_{i+1}))^2}{|N(\alpha_{i+1})|}=
\frac{D-(T_{i+1}-rN_{i+1})^2}{N_{i+1}},$$
and so the norm is maximal when $|T_{i+1}-rN_{i+1}|$ is minimal, which happens when 
$\left|\frac{T_{i+1}}{N_{i+1}}-r\right|$ is minimal (with $0\leq r\leq u_{i+2}$). 

Let's start by showing that
\begin{equation}\label{eqn:heuristic}
\left|\frac{T_{i+1}}{N_{i+1}}-\frac{u_{i+2}}{2}\right|<\frac 12.\tag{*}
\end{equation}

By Proposition \ref{recurrence ni}b) we have $T_{i+1}=-\sqrt D+N_{i+1}c_{i+2}$, and hence 
\eqref{eqn:heuristic} is equivalent to 
$$N_{i+1}>\left|-2\sqrt D+N_{i+1}(2c_{i+2}-u_{i+2})\right|=
\left|-2\sqrt D+N_{i+1}\left(c_{i+2}+\frac 1{c_{i+3}}\right)\right|.$$
For this, we prove two inequalities:

a) $N_{i+1}>-2\sqrt D+N_{i+1}\left(c_{i+2}+\frac 1{c_{i+3}}\right)$:

By Proposition \ref{estimate ni}, we have 
$$2\sqrt D>N_{i+1}c_{i+2}>N_{i+1}\left(c_{i+2}+\frac 1{c_{i+3}}-1\right),$$
as we wanted to prove.

b) $N_{i+1}>2\sqrt D-N_{i+1}\left(c_{i+2}+\frac 1{c_{i+3}}\right)$:

First note that $$c_{i+1}c_{i+2}=\left(u_{i+1}+\frac 1{c_{i+2}}\right)c_{i+2}=
u_{i+1}c_{i+2}+1\geq c_{i+2}+1,$$
and so $$-\frac 1{c_{i+1}c_{i+2}}+\frac 1{c_{i+2}}-\frac 1{c_{i+1}c_{i+2}^2}\geq 0.$$

Hence
\begin{equation*}
\begin{split}
2\sqrt D & \leq
2\sqrt D-\frac {2\sqrt D}{c_{i+1}c_{i+2}}+\frac {2\sqrt D}{c_{i+2}}-
\frac {2\sqrt D}{c_{i+1}c_{i+2}^2}=
\frac {2\sqrt D}{c_{i+2}}\left(1-\frac 1{c_{i+1}c_{i+2}}\right)(c_{i+2}+1)<\\
& <N_{i+1}(c_{i+2}+1)<N_{i+1}\left(c_{i+2}+1+\frac 1{c_{i+3}}\right),
\end{split}
\end{equation*}
as we wanted to show (note that in the penultimate inequality we used Proposition \ref{estimate ni}).
This proves \eqref{eqn:heuristic}.

\

Let $r_1\in\mathbb Z$ be such that $\left|\frac{T_{i+1}}{N_{i+1}}-r_1\right|$ is minimal.
Then
$\left|\frac{T_{i+1}}{N_{i+1}}-r_1\right|\leq\frac 12,$
and so using the triangle inequality and \eqref{eqn:heuristic},
$$\left|r_1-\frac{u_{i+2}}{2}\right|\leq 
\left|r_1-\frac{T_{i+1}}{N_{i+1}}\right|+\left|\frac{T_{i+1}}{N_{i+1}}-\frac{u_{i+2}}{2}\right|
<\frac 12+\frac 12=1.$$

As $r_1$ is an integer and $u_{i+2}\geq 1$, we see that $0\leq r_1\leq u_{i+2}$, and so $r_0=r_1$ and the proposition is proved.
\end{proof}

\

From now on, we shall assume that $i$ is odd and $u_{i+2}$ even as in Proposition \ref{heuristic r}. Recall that we have defined 
$M_i:=N(\alpha_{i, u_{i+2}/2})$. 
Let's now find a power series expression for the norm $M_i$ till degree $3$, which we shall then use to find the example of Theorem \ref{counterexample}.
(A similar formula holds also in the case of odd $u_{i+2}$, or even for arbitrary $r$, but it's more complicated.)

\begin{prop}\label{power series beta}
Let $i$ be odd and $u_{i+2}$ even. Then

a) Degree 1: 
$$\frac{2M_i}{\sqrt D}=u_{i+2}+\frac 1{u_{i+1}}+\frac 1{u_{i+3}}+\cdots,$$
where $\cdots$ stands for terms of total degree in $u_j^{-1}$ greater than 2.

b) Degree 3: 
$$\frac{2M_i}{\sqrt D}=u_{i+2}+\frac 1{u_{i+1}}+\frac 1{u_{i+3}}-
\left(\frac 1{u_iu_{i+1}^2}+\frac 1{u_{i+3}^2u_{i+4}}+
\frac 1{u_{i+2}}\left(\frac 1{u_{i+1}}-\frac 1{u_{i+3}}\right)^2\right)+\cdots,$$
where $\cdots$ stands for terms of total degree in $u_j^{-1}$ greater than 4.
If $u\in\mathbb N$ is such that $u_j\geq u$ for all $j$, then the error satisfies $|\cdots|<\frac {105}{u^5}$.
\end{prop}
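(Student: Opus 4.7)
The approach is to first reduce $M_i$ to a clean closed form in $c_{i+2}$, $c_{i+3}$ and $N_i$, and only then substitute the truncated series from Theorem~\ref{power series ni} and Lemma~\ref{estimate error}.

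Since $i$ is odd, $N(\alpha_{i+1})=-N_{i+1}$, so Theorem~\ref{indecomposable estimate norm}a) with $r=u_{i+2}/2$ gives
$$M_i=\frac{D-\bigl(T_{i+1}-(u_{i+2}/2)N_{i+1}\bigr)^{2}}{N_{i+1}}.$$
Proposition~\ref{recurrence ni}b) gives $T_{i+1}=-\sqrt{D}+N_{i+1}c_{i+2}$, and because $c_{i+2}-u_{i+2}/2=\tfrac12(c_{i+2}+1/c_{i+3})=:B$, the expression collapses to
$$M_i=2\sqrt{D}\,B-N_{i+1}B^{2}.$$
Now substitute $N_{i+1}=2\sqrt{D}/c_{i+2}-N_i/c_{i+2}^{2}$ from Proposition~\ref{recurrence ni}c). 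The identity $2B/c_{i+2}=1+1/(c_{i+2}c_{i+3})$ makes $4B-4B^{2}/c_{i+2}$ telescope to $c_{i+2}-1/(c_{i+2}c_{i+3}^{2})$, producing the master identity
$$\frac{2M_i}{\sqrt{D}}=c_{i+2}-\frac{1}{c_{i+2}c_{i+3}^{2}}+\frac{N_i}{\sqrt{D}}\left(\frac{1}{2}+\frac{1}{c_{i+2}c_{i+3}}+\frac{1}{2c_{i+2}^{2}c_{i+3}^{2}}\right).$$

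From here the rest is a routine truncation. For part (a), using $c_{i+2}=u_{i+2}+1/u_{i+3}+O(1/u^{2})$ and Theorem~\ref{power series ni}a) to get $N_i/(2\sqrt{D})=1/u_{i+1}+O(1/u^{2})$ yields the three claimed leading terms $u_{i+2}+1/u_{i+3}+1/u_{i+1}$, all other contributions having total $u_j^{-1}$-degree at least $3$. For part (b), I insert Theorem~\ref{power series ni}b) for $N_i/(2\sqrt{D})$, Lemma~\ref{estimate error}b) for $1/c_{i+3}$ inside $c_{i+2}$, and Lemma~\ref{estimate error}a),c) for the extra $1/c_{i+2}$, $1/c_{i+3}^{2}$ factors in $1/(c_{i+2}c_{i+3})$ and $1/(c_{i+2}c_{i+3}^{2})$. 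Multiplying out and expanding $\tfrac{1}{u_{i+2}}(1/u_{i+1}-1/u_{i+3})^{2}$ recovers the stated formula monomial by monomial: $-1/(u_{i+3}^{2}u_{i+4})$ comes from the degree-$3$ correction to $c_{i+2}$; $-1/(u_iu_{i+1}^{2})-1/(u_{i+1}^{2}u_{i+2})$ from the degree-$3$ expansion of $N_i/(2\sqrt{D})$; $+2/(u_{i+1}u_{i+2}u_{i+3})$ from the cross product $(N_i/\sqrt{D})\cdot 1/(c_{i+2}c_{i+3})$; and $-1/(u_{i+2}u_{i+3}^{2})$ from the lone term $-1/(c_{i+2}c_{i+3}^{2})$.

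The main obstacle is the bookkeeping for the $105/u^{5}$ error bound: one has to enumerate every product of truncation errors $\ve/u^{k}$ that arises when the expansions are multiplied out inside the master identity, bound each by its rational coefficient times $1/u^{5}$, and sum. This is entirely routine, in the same style as the proof of Theorem~\ref{power series ni}c), and the constant $105$ emerges as the sum of the finitely many contributing coefficients; I would therefore present part (a) and the degree-$3$ calculation in detail and only sketch this accounting, exactly as was done for Theorem~\ref{power series ni}.
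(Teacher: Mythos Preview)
Your proof is correct, but it takes a genuinely different route from the paper's. The paper eliminates $T_{i+1}$ by evaluating $N(\alpha_{i,r})=N_i-r^2N_{i+1}+2rT_{i+1}$ at the two special values $r=u_{i+2}$ and $r=u_{i+2}/2$ to obtain the clean symmetric identity
\[
4M_i=u_{i+2}^2\,N_{i+1}+2(N_i+N_{i+2}),
\]
and then simply plugs in the expansions of Theorem~\ref{power series ni}: degree~3 for $N_{i+1}$ in part (a), and degree~5 for $N_{i+1}$ together with degree~3 for $N_i,N_{i+2}$ in part (b). Your ``master identity'' instead expresses $2M_i/\sqrt D$ directly in terms of $c_{i+2}$, $c_{i+3}$, and a single $N_i$, trading one application of Theorem~\ref{power series ni} for raw Lemma~\ref{estimate error} expansions of the $c_j$. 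The paper's version is shorter to derive and makes the symmetry in $u_{i+1}\leftrightarrow u_{i+3}$, $u_i\leftrightarrow u_{i+4}$ (visible in the final formula) essentially automatic; your version requires more algebra up front but is arguably more self-contained, since it bypasses the need for the degree-5 expansion of $N_{i+1}$ from Theorem~\ref{power series ni}c). Your handling of the error bound---carrying out the degree-$3$ computation in full and only sketching the $105/u^5$ accounting---matches the level of detail the paper itself provides.
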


\begin{proof}
First let $0\leq r\leq u_{i+2}$. We have
\begin{displaymath}
\begin{split}
N(\alpha_{i, r})=&(\alpha_i+r\alpha_{i+1})(\alpha_i\pr+r\alpha_{i+1}\pr)=
N(\alpha_i)+r^2N(\alpha_{i+1})+2r(p_ip_{i+1}-Dq_iq_{i+1})\\
=&
N_i-r^2N_{i+1}+2rT_{i+1}.
\end{split}
\end{displaymath}

If we set $r=u_{i+2}$, the left hand side becomes $N_{i+2}$ and we have 
$$2u_{i+2}T_{i+1}=N_{i+2}+u_{i+2}^2N_{i+1}-N_i.$$

Let us now take $r=u_{i+2}/2$ and combine the preceding two formulas to get
\begin{displaymath}
\begin{split}
4M_i=& 4N_i-u_{i+2}^2N_{i+1}+2(N_{i+2}+u_{i+2}^2N_{i+1}-N_i)\\
=& u_{i+2}^2N_{i+1}+2(N_{i+2}+N_i).
\end{split}
\end{displaymath}

Now we divide this equation by $2\sqrt D$ and apply the formulas from Theorem \ref{power series ni}.

a) $$\frac{2M_i}{\sqrt D}=u_{i+2}^2\cdot\frac{1}{u_{i+2}}
\left(1-\frac{1}{u_{i+1}u_{i+2}}-\frac{1}{u_{i+2}u_{i+3}}\right)+
2\left(\frac{1}{u_{i+3}}+\frac{1}{u_{i+1}}\right)+\cdots,$$
which simplifies to the desired formula.

b) is similar, we just use degree 5 expansion for $N_{i+1}$ and degree 3 for $N_i$ and $N_{i+2}$.
\end{proof}

\

Now we are ready to explain how we constructed the counterexample to Conjecture \ref{conjecture} in Theorem \ref{counterexample}. 
The conjecture is based on the expectation that, as $i$ varies, $N(\alpha_{i,r})$ is maximal for $i$ such that $N_{i+1}=|N(\alpha_{i+1})|$ is minimal.
Thus the key is to find some $D$ and odd indices $i<j$ such that 
\begin{itemize}
\item $\alpha_{i+1}$ is the element with the largest negative norm (= the smallest norm in absolute value $N_{i+1}=N$),
\item $N_{i+1}=|N(\alpha_{i+1})|<N_{j+1}=|N(\alpha_{j+1})|$, but the difference of the norms is small,
\item $M_i=N(\alpha_{i, r})<M_j=N(\alpha_{j, t})$ for $r=u_{i+2}/2, t=u_{j+2}/2$ as in Proposition \ref{heuristic r}.
\end{itemize}
We shall do this by prescribing some of the coefficients $u_i$ of the continued fraction for $\sqrt D$ and using Friesen's theorem \cite{Fr} that guarantees the existence of infinitely many such squarefree integers $D$.
Since we are using only heuristics and not precise estimates, we then have to verify that all the conditions are indeed satisfied. Hence in the following discussion we ignore error terms (and place quotation marks around claims that are imprecise).

First of all, if the length $s$ of period of the continued fraction for $\sqrt D$ is odd, then the fundamental unit has the largest negative norm $-1$. To avoid this situation we take $s$ even.

From Theorem \ref{power series ni}a), we see that $N_{i+1}<N_{j+1}$ ``if and only if" $u_{i+2}\geq u_{j+2}$.
But if $u_{i+2}> u_{j+2}$, then Proposition \ref{power series beta}a) ``implies" that $M_i>M_j$, which we don't want. Hence let's take $u_{i+2}=u_{j+2}$ and consider the higher order terms.

Theorem \ref{power series ni}b) then says that $N_{i+1}<N_{j+1}$ ``if and only if" 
$\frac{1}{u_{i+1}}+\frac{1}{u_{i+3}}\geq \frac{1}{u_{j+1}}+\frac{1}{u_{j+3}}$. But again 
Proposition \ref{power series beta}a) ``implies" that if $M_i<M_j$, then strict inequality cannot occur, so that we have $$\frac{1}{u_{i+1}}+\frac{1}{u_{i+3}}= \frac{1}{u_{j+1}}+\frac{1}{u_{j+3}}.$$

In this case \ref{power series ni}c) gives us $N_{i+1}<N_{j+1}$ ``if and only if" 
$$\frac{1}{u_{i}u_{i+1}^2}+\frac{1}{u_{i+3}^2u_{i+4}}\leq\frac{1}{u_{j}u_{j+1}^2}+\frac{1}{u_{j+3}^2u_{j+4}},$$
and \ref{power series beta}b) says $M_i<M_j$ ``if and only if"
$$\frac 1{u_iu_{i+1}^2}+\frac 1{u_{i+3}^2u_{i+4}}+
\frac 1{u_{i+2}}\left(\frac 1{u_{i+1}}-\frac 1{u_{i+3}}\right)^2\geq 
\frac 1{u_ju_{j+1}^2}+\frac 1{u_{j+3}^2u_{j+4}}+
\frac 1{u_{j+2}}\left(\frac 1{u_{j+1}}-\frac 1{u_{j+3}}\right)^2.$$
It seems possible to arrange for both of the last two inequalities to be strict, which should allow us to indeed get $N_{i+1}<N_{j+1}$ and $M_i<M_j$!

First of all, subtracting the inequalities we obtain (note that we're taking $u_{i+2}=u_{j+2}$)
$$\left(\frac 1{u_{i+1}}-\frac 1{u_{i+3}}\right)^2> 
\left(\frac 1{u_{j+1}}-\frac 1{u_{j+3}}\right)^2.$$
Since we also have $$\frac{1}{u_{i+1}}+\frac{1}{u_{i+3}}= \frac{1}{u_{j+1}}+\frac{1}{u_{j+3}},$$
let's take one of the smallest solutions of this system, 
$u_{i+1}=2, u_{i+3}=6$ and $u_{j+1}=u_{j+3}=3.$
Our two inequalities then greatly simplify and we see that $u_i=10, u_{i+4}=1, u_j=u_{j+4}=4, u_{i+2}=u_{j+2}=12$ indeed give a solution with strict inequalities.

We want to place these numbers as coefficients of a continued fraction that isn't unnecessarily long, so take for example $i=1$, $j=7$ and $s=18$, and consider
\begin{equation}\label{eqn:cont frac}
\sqrt D=[u_0, \overline{10,2,12,6,1,3,4,3,12,3,4,2,1,6,12,2,10,2u_0}].\tag{**}
\end{equation}
Note that the sequence $u_1, \dots, u_{s-1}$ is symmetric and that the coefficient $u_6=3$ was chosen experimentally so that everything works nicely. 
By Friesen's theorem \cite{Fr}, we know that there are infinitely many such squarefree integers $D$, so we just find one of them to get Theorem \ref{counterexample}.

\

To conclude, let us sketch the argument that there are most likely infinitely many counterexamples.

For concreteness, we can continue with the example from above and take $\sqrt D$ as in \eqref{eqn:cont frac}. We want to show that there are infinitely many values of $u_0$ such that items (1) -- (5) hold as in Theorem \ref{counterexample}.

It is straightforward to compute that there are infinitely many values of $u_0$ (given by a linear polynomial in a nonnegative integer variable $x$) and $D$ (given by a quadratic polynomial in $x$) satisfying \eqref{eqn:cont frac} -- for the details of this and following computations see the Mathematica notebook at \url{sites.google.com/site/vitakala/research/indec}.

We shall later choose $x$ so that $D\equiv 2\pmod 4$ is squarefree, but first one can formally compute the norms $N_i$ of convergents. These norms are linear polynomials in $x$ and one verifies that $N_2=N$ is minimal (for every $x$). Similarly one computes that $M_7=N(\alpha_{7,6})$ is the largest norm of a semiconvergent.
 
Thus for each such squarefree $D$, items (1), (2), (3), and (5) in Theorem \ref{counterexample} will be satisfied for $0<a_0<\frac N2$ such that $\frac{D-a_0^2}{N}=M_1$. 
But it could happen that this value of $a$ is not the smallest solution of $a^2\equiv D\pmod N$, as required by (4) and Conjecture \ref{conjecture}. 
However, if $N$ is prime, then this congruence has exactly two solutions $0<a_0<N-a_0<N$, and hence (4) is satisfied for $a_0$.

It only remains to arrange for $D\equiv 2\pmod 4$ (which holds when $x\equiv 2\pmod 4$) to be squarefree and, simultaneously, for the value of the linear polynomial $N(x)$ to be prime. 
It is possible to prove this as in \cite{Er} (there is no local obstruction; in fact, in our case $D(x)=N_8(x)M_7(x)$ is the product of two coprime linear polynomials, which are also coprime with $N(x)$).

More generally, one could similarly argue for a general sequence $u_1, \dots, u_{s-1}$ such that the error estimates in 
\ref{power series ni}c) and \ref{power series beta}b) allow one to provably determine the smallest $N_i$ and largest $M_j$.


\begin{thebibliography}{HW}

\bibitem[BK]{BK} V. Blomer, V. Kala, \emph{Number fields without universal $n$-ary quadratic forms}, Math. Proc. Cambridge Philos. Soc. \textbf{159} (2015), 239-252


\bibitem[DS]{DS} A. Dress, R. Scharlau, \emph{Indecomposable totally positive numbers in real quadratic orders}, J. Number Theory \textbf{14} (1982), 292-306

\bibitem[Er]{Er} P. Erd\" os, \emph{Arithmetical properties of polynomials}, 
J. London Math. Soc. \textbf{28} (1953), 416-425

\bibitem[Fr]{Fr} C. Friesen, \emph{On continued fractions of given period}, Proc. Amer. Math. Soc. \textbf{103} (1988), 8-14

\bibitem[HW]{HW} G. H. Hardy, E. M. Wright, \emph{An introduction to the theory of numbers}, 5th edition. The Clarendon Press, Oxford University Press, New York, 1979


\bibitem[JK]{JK} S. W. Jang, B. M. Kim, \emph{A refinement of the Dress-Scharlau theorem}, J. Number Theory \textbf{158} (2016), 234-243

\bibitem[Ka]{Ka} V. Kala, \emph{Universal quadratic forms and elements of small norm in real quadratic fields}, Bull. Aust. Math. Soc., 8 pp., to appear


\bibitem[Pe]{Pe} O. Perron, \emph{Die Lehre von den Kettenbr\" uchen}, B. G. Teubner, 1913


 
\end{thebibliography}
\end{document}